 \newcounter{enunciato}[section]
 \newtheorem{ittheorem}{Theorem}
 \newtheorem{itlemma}{Lemma}
 \newtheorem{itproposition}{Proposition}
 \newtheorem{itdefinition}{Definition}
 \newtheorem{itcorollary}{Corollary}
 \newtheorem{itconjecture}{Conjecture}
 \newenvironment{theorem}{\addtocounter{enunciato}{1}
 \begin{ittheorem}}{\end{ittheorem}}
 \newenvironment{lemma}{\addtocounter{enunciato}{1}
 \begin{itlemma}}{\end{itlemma}}
\newcommand{\halmos}{\rule{1ex}{1.4ex}}
\newenvironment{proof}{\noindent {\em Proof}.\,\,}
   {\hspace*{\fill}$\halmos$\medskip}
\def \qed {{\hspace*{\fill}$\halmos$\medskip}}
\def \ba {\begin{array}}
\def \ea {\end{array}}
\def \Z {{\mathbb Z}}
\def \R {{\mathbb R}}
\def \N {{\mathbb N}}
\def \ra {\rightarrow}
\begin{document}
\title{Minimising Dirichlet eigenvalues on cuboids of unit measure}

\author{\renewcommand{\thefootnote}{\arabic{footnote}}
M.\ van den Berg
\footnotemark[1]
\\
\renewcommand{\thefootnote}{\arabic{footnote}}
K.\ Gittins
\footnotemark[2]
}

\footnotetext[1]{School of Mathematics, University of Bristol, University Walk, Bristol BS8 1TW, United Kingdom.\, {\tt mamvdb@bristol.ac.uk} }

\footnotetext[2]{School of Mathematics, University of Bristol, University Walk, Bristol BS8 1TW, United Kingdom.\, {\tt kg13951@bristol.ac.uk}}

\date{7 July 2016}

\maketitle

\begin{abstract}
We consider the minimisation of Dirichlet eigenvalues $\lambda_k$, $k \in \N$, of the Laplacian
on cuboids of unit measure in $\R^3$. We prove that any sequence of optimal cuboids
in $\R^3$ converges to a cube of unit measure in the sense of Hausdorff as $k \rightarrow \infty$.

\vskip 0.5truecm
\noindent
{\it AMS} 2010 {\it subject classifications.} 35J20, 35P99.\\
{\it Key words and phrases.} Spectral optimisation, Dirichlet eigenvalues.

\medskip\noindent
{\it Acknowledgements.} MvdB acknowledges support by The Leverhulme Trust
through International Network Grant \emph{Laplacians, Random Walks, Bose Gas,
Quantum Spin Systems}. KG was supported by an EPSRC DTA. The authors wish to thank Trevor Wooley for helpful discussions.

\end{abstract}

%%%%%%%%%%%%%%%%%%%%%%%%%%%%%%
\section{Introduction.}\label{Introduction}
The eigenvalues of the Laplacian have been the object of intensive study over the last century. Of particular interest
are related shape optimisation problems. For $k \in \N$, the goal is to optimise the $k$'th eigenvalue of the Laplacian with boundary conditions over a collection of open sets in $\R^m$. This  collection satisfies geometric constraints, such as fixed Lebesgue measure or fixed perimeter.

For an open set $\Omega \subset \R^m$, $m \geq 2$, of finite Lebesgue measure $|\Omega|$, we let $\lambda_k(\Omega)$,
$k \in \N$, denote the Dirichlet eigenvalues of the Laplacian on $\Omega$ which are strictly positive, arranged in non-decreasing order and counted with multiplicity:
\begin{equation*}%\label{e0a}
\lambda_1(\Omega) \leq \lambda_2(\Omega) \leq \lambda_3(\Omega) \leq \dots
\leq \lambda_k(\Omega) \leq \dots
\end{equation*}
This sequence accumulates at $+\infty$.

We consider the following minimisation problem:$$\lambda_k^*(m) := \inf \{\lambda_k(\Omega) : \Omega\, \textup{\,open in\,} \R^m,\,
\vert \Omega \vert = c\}.$$
It was shown by Faber and Krahn that among all open sets in $\R^{m}$ of measure $c$, the
ball of measure $c$ minimises the first Dirichlet eigenvalue, see \cite{H}.
Krahn and Szeg\"{o} proved that, among all open sets in $\R^{m}$ of measure $c$,
the second Dirichlet eigenvalue is minimised by the union of two disjoint balls of measure $\frac{c}{2}$ each, see \cite{H}.
For $k \geq 3$, the existence of an open set of prescribed measure which minimises the $k$'th Dirichlet eigenvalue
remains unresolved to date. However, in the class of quasi-open sets of prescribed measure, it was shown
by Bucur in \cite{B1} that a minimiser does exist and that such a minimiser is bounded and has finite perimeter.
Independently, Mazzoleni and Pratelli proved the existence of a minimiser in \cite{MP} in the collection of quasi-open sets.
For any lower semi-continuous, increasing function of the first $k$ Dirichlet eigenvalues, they proved the
existence of a minimiser which is bounded in terms of $k$ and $m$ independently of the function.
It was shown in \cite{vdBI} that for $k \leq m+1$, any bounded minimiser of $\lambda_k(\Omega)$ has
at most $\min\{7,k\}$ components.

No optimal domains are known for $\lambda_k$ with $k \geq 3$. In particular, the conjecture that
if $m=2$, then $\lambda_3(\Omega)$ is bounded from below by the third eigenvalue of the disc with
the same measure as $\Omega$ is open. There are no obvious candidates for minimisers of $\lambda_k$
with $k \geq 5$ in any dimension $m \geq 2$. It was shown in \cite{Be} that for $k \geq 5$,
$\lambda_k(\Omega)$ cannot be minimised by a disc or a disjoint union of discs. Numerical investigations
\cite{AF1, Ou} suggest that for some values of $k$ the minimisers may not have any symmetries or may
not be describable in terms of standard functions.

P\'{o}lya's conjecture for Dirichlet eigenvalues asserts that for all bounded, open sets
$\Omega \subset \R^m$, $\lambda_k(\Omega) \geq 4\pi^2 (\omega_m \vert \Omega \vert)^{-2/m} k^{2/m}$,
where $\omega_m$ denotes the measure of a ball in $\R^m$ of radius 1. It was shown in \cite{CoSo}
that P\'{o}lya's conjecture is equivalent to $\lambda_k^*(m)$ being asymptotically equal to $4\pi^2
(\omega_m c)^{-2/m} k^{2/m}$ as $k \rightarrow \infty$.

It is also interesting to consider the optimisation of the eigenvalues of the Laplacian subject to other geometric constraints, such as fixed perimeter. For the Dirichlet eigenvalues, existence of a minimiser in the class of open
sets in $\R^m$ of finite Lebesgue measure and prescribed perimeter was shown in \cite{DPV}. Moreover, it was shown there
that any minimiser is bounded and connected, and regularity results for the boundary were also obtained.
Bucur and Freitas, \cite{BF}, showed that any sequence of minimisers of $\lambda_k$ in $\R^2$ with perimeter $\ell$ converges in the sense of Hausdorff to the disc of perimeter $\ell$ as $k \rightarrow \infty$. They also showed that if the collection of admissible sets is restricted to the collection of $n$-sided, convex, planar polygons of perimeter $\ell$, then any
sequence of minimisers converges to the regular $n$-sided polygon of perimeter $\ell$ as $k \rightarrow \infty$.
For $m \geq 2$, other constraints were considered in \cite{vdB1}, including perimeter and moment of inertia,
subject to an additional convexity constraint. Further results for the Dirichlet eigenvalues were obtained in \cite{AF2}, \cite{BBH}, \cite{vdBI}, \cite{BF} and \cite{vdB1}.
Some of the results of \cite{AF2} follow directly from those in \cite{vdB1}, while the results of \cite{DPV} supersede those of \cite{BBH}.

Recently, Antunes and Freitas considered the problem of minimising
$\lambda_k$ over all planar rectangles of unit measure, \cite{AF}. They proved that
any sequence of minimising rectangles for the Dirichlet eigenvalues
converges to the unit square in the sense of Hausdorff as $k \rightarrow \infty$.

In this paper, we focus on the Dirichlet eigenvalues of the Laplacian on cuboids
in $\R^3$ of unit measure. By a cuboid in $\R^3$ we mean a parallelepiped in $\R^3$
where all faces of the boundary are rectangular.
Let $R_{a_1 , a_2 , a_3}$ denote a cuboid in $\R^3$ of side-lengths $a_1 , a_2, a_3$ such that
$a_1 a_2 a_3 = 1$ and $a_1 \leq a_2 \leq a_3$,
\begin{equation}\label{ei0}
R_{a_1,a_2,a_3} = \{(x_1,x_2,x_3) \in \R^3 : 0 < x_1 < a_1, 0 < x_2 < a_2, 0 < x_3 < (a_1 a_2)^{-1}\}.
\end{equation}

We prove the following.
\begin{theorem}\label{T1}
\begin{enumerate}
\item[\textup{(i)}] Let $k\in \N$. The  variational problem
\begin{equation*}%\label{i1}
\lambda_k^*:=\inf\{\lambda_k(R_{a_{1}, a_{2}, a_{3}}) :
a_{1} \leq a_{2} \leq a_{3}\}
\end{equation*}
has a minimising cuboid $R_{a_{1,k}^*,a_{2,k}^*,a_{3,k}^*}$ with side-lengths $a_{1,k}^*\le a_{2,k}^*\le a_{3,k}^*$, such that $a_{1,k}^* a_{2,k}^* a_{3,k}^* =1.$ \\
 \item[\textup{(ii)}]
\begin{equation}\label{ei2a}
a_{3,k}^* \leq 1 + O(k^{-(2-\beta)/6}), \, k \rightarrow \infty,
\end{equation}
where $\beta$ is the exponent of the remainder in $$\# \{(i_1,i_2,i_3)\in \Z^3:i_1^2+i_2^2+i_3^2 \leq R^2\} -\frac{4\pi}{3}R^3=O(R^{\beta})\, , R \ra \infty. $$
Furthermore, any sequence of optimal cuboids $R_{a_{1,k}^*,a_{2,k}^*,a_{3,k}^*}$ converges to the unit cube in $\R^3$ in the sense of Hausdorff as $k \rightarrow \infty$.
\end{enumerate}
\end{theorem}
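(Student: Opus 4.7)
The plan is to prove (i) by a standard compactness argument and (ii) by comparing the Weyl expansions of the Dirichlet eigenvalues of $R^*_k$ and the unit cube.

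For (i), observe that $\pi^2/a_1^2\leq\lambda_1(R_{a_1,a_2,a_3})\leq\lambda_k(R_{a_1,a_2,a_3})$, so every cuboid with $\lambda_k(R)\leq\lambda_k(\textup{cube})=:\Lambda_k$ satisfies $a_1\geq\pi\Lambda_k^{-1/2}$ and, via $a_1a_2a_3=1$ with $a_1\leq a_2\leq a_3$, also $a_3\leq\Lambda_k/\pi^2$. The admissible parameter region is thus compact and the eigenvalues depend continuously on the side-lengths, so a minimiser exists.

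For (ii), the main ingredient is a uniform Weyl asymptotic for the cuboid counting function,
\begin{equation*}
N_R(\lambda)=\frac{\lambda^{3/2}}{6\pi^2}-\frac{S_R\,\lambda}{8\pi}+O\!\left(\lambda^{\beta/2}\right),\qquad S_R:=a_1a_2+a_1a_3+a_2a_3,
\end{equation*}
valid for all cuboids of unit volume with side-lengths in a fixed compact subset of $(0,\infty)^3$. I would derive this from the $\{\pm 1\}^3$ symmetry identity
\begin{equation*}
8N_R(\lambda)=M_R(\lambda)-1-2\sum_{i=1}^{3}N_i^{1}(\lambda)-4\sum_{1\leq i<j\leq 3}N_{ij}^{2}(\lambda),
\end{equation*}
where $M_R(\lambda)$ counts points of $\Z^3$ in the ellipsoid $\sum x_i^2/a_i^2\leq\lambda/\pi^2$ and $N_i^{1}$, $N_{ij}^{2}$ are the induced axial and face counts. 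Poisson summation applied to $M_R(\lambda)$ produces a Bessel-type sum over the dual lattice which, for $a_i$ in a fixed compact set, inherits the sphere remainder $O(\lambda^{\beta/2})$ with uniform constant, while the lower-order boundary terms of orders $\sqrt{\lambda}$ and $\lambda^{\alpha/2}$ (with $\alpha$ the Gauss circle exponent) are absorbed.

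Inverting this expansion gives $\lambda_k(R)=(6\pi^2 k)^{2/3}+c_0\,S_R\,k^{1/3}+O(k^{\beta/3})$ uniformly in the same class, with an explicit constant $c_0>0$. Applying this to $R^*_k$ and to the unit cube, the optimality $\lambda_k(R^*_k)\leq\lambda_k(\textup{cube})$ forces $(S_{R^*_k}-3)\,k^{1/3}=O(k^{\beta/3})$, so $S_{R^*_k}-3=O(k^{(\beta-2)/3})$. The elementary constrained inequality $S_R\geq 1/a_3+2\sqrt{a_3}$ (from $a_1+a_2\geq 2\sqrt{a_1a_2}=2/\sqrt{a_3}$) has right-hand side equal to $3$ at $a_3=1$ with zero first derivative and positive second derivative, so $(a_3-1)^2\leq c(S_R-3)$ near the cube. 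Combining, $a_{3,k}^*-1=O(k^{-(2-\beta)/6})$. Hausdorff convergence of $R^*_k$ to the unit cube then follows because $a_{3,k}^*\to 1$, together with $a_{1,k}^*\leq a_{2,k}^*\leq a_{3,k}^*$ and $a_{1,k}^*a_{2,k}^*a_{3,k}^*=1$, forces $a_{1,k}^*,a_{2,k}^*\to 1$. The main obstacle is the uniform ellipsoid Weyl remainder with the sphere exponent $\beta$: this requires both an a priori compact-range bound on the side-lengths of $R^*_k$ (stronger than the $k$-dependent bound from (i), e.g.\ via a preliminary qualitative convergence argument) and a careful Poisson-summation analysis transferring the sphere exponent to nearby ellipsoids uniformly.
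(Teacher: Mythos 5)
Your part (i) is essentially identical to the paper's compactness argument, and the broad outline of your part (ii) matches the paper too: the same $\{\pm 1\}^3$ symmetry decomposition of the positive-octant counting function (the paper's identity \eqref{e2.20d} is your identity regrouped, since $S_R = 1/a_1 + 1/a_2 + 1/a_3$ when $a_1a_2a_3=1$), comparison against the unit cube's eigenvalues, and finally AM--GM on $1/a_1 + 1/a_2 \geq 2\sqrt{a_3}$ with a second-order Taylor expansion to convert $S_R - 3 = O(k^{-(2-\beta)/3})$ into $a_{3,k}^*-1 = O(k^{-(2-\beta)/6})$.

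However, the step you flag at the end as the ``main obstacle'' is in fact a genuine gap, and the workaround you propose is circular. To apply the refined lattice-point remainders with $k$-independent constants you need the side-lengths of $R^*_k$ confined to a fixed compact set, yet a ``preliminary qualitative convergence argument'' would itself require exactly the asymptotics you are trying to justify. The paper breaks this circle in Sections~3--4 by a purely elementary route that needs no Weyl asymptotics at all: Lemma~\ref{L1} exploits concavity of $i \mapsto (R^2 - i^2)^{n/2}$ for $n \in \{1,2\}$ to obtain an over-estimate of the lattice sum that retains a \emph{negative} $\lambda/a_1$ term, giving the one-sided bound
\begin{equation*}
N(\lambda) \;\leq\; \frac{\lambda^{3/2}}{6\pi^{2}} - \frac{\lambda}{8\pi a_{1}} + \frac{\lambda^{1/2}}{16 a_{1}^{2}}
\end{equation*}
valid for \emph{every} cuboid of unit volume (Lemma~\ref{L2}). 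Combined with Gauss's crude lower bound for $N(\nu_k)$ on the unit cube and a trivial area bound on the multiplicity, this yields the explicit, $k$-independent bound $a_{3,k}^* \leq 319$ (Lemma~\ref{L2.1}) before any refined asymptotics enter. This upper-bound-only route is also leaner than a full two-sided Weyl asymptotic followed by inversion.

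A second, smaller gap: your inversion $\lambda_k(R) = (6\pi^2 k)^{2/3} + c_0 S_R k^{1/3} + O(k^{\beta/3})$ must be applied to the unit cube, whose spectrum is highly degenerate, and inverting $N(\lambda)=k$ in the presence of large eigenvalue multiplicities is not automatic. The paper avoids inversion entirely and instead bounds $\frac{\nu_k^{3/2} - 6\pi^2 k}{6\pi^2 \nu_k}$ directly, controlling the multiplicity $\Theta_k$ of the $k$-th cube eigenvalue by a number-theoretic argument ($r_3(d) = O(d^{1/2 + o(1)})$ via divisor bounds), so that $\Theta_k \nu_k^{-1} = O(k^{-1/3+o(1)})$ is absorbed into the error. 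You would need an analogous argument to make the inversion rigorous.
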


The best known estimate to date is $\beta > \frac{63}{43}$, see \cite{IKKN04}. Hence \eqref{ei2a} holds for $\beta= \frac{63}{43}+\epsilon$ where $\epsilon>0$ is arbitrary. Theorem~\ref{T1} extends the result which was given in
Theorem 2.1 of \cite{AF} for the planar case mentioned above.

The Dirichlet eigenvalues of the Laplacian on a cuboid $R_{a_1,a_2,a_3}$ (as in \eqref{ei0}) are given by
\begin{equation}\label{ei1}
\frac{\pi^2 i_1^2}{a_{1}^2} + \frac{\pi^2 i_2^2}{a_{2}^2} + \frac{\pi^2 i_3^2}{a_{3}^2}, \quad
i_1, i_2, i_3 \in \N.
\end{equation}
By listing these in non-decreasing order including multiplicities, the $k$'th Dirichlet eigenvalue on
$R_{a_1,a_2,a_3}$, $\lambda_k(R_{a_1,a_2,a_3})$, is the $k$'th item of this list. Let $\lambda \in \R$,
$\lambda \geq 0$, and $a_{1}, a_{2}, a_{3} \in \R$ such that $a_{1}a_{2}a_{3}=1$ and $a_{1}\leq a_{2} \leq a_{3}$.
With \eqref{ei1} in mind, we define
\begin{equation}\label{e2.1a}
E(\lambda) := \left\{(x_1,x_2,x_3) \in \R^{3} : \frac{x_1^2}{a_1^2} + \frac{x_2^2}{a_2^2} + \frac{x_3^2}{a_3^2}
\leq \frac{\lambda}{\pi^{2}} \right\}.
\end{equation}
The ellipsoid $E(\lambda)$ has semi-axes
\begin{equation*}%\label{e2.18}
r_{1} = \frac{a_{1}\lambda^{1/2}}{\pi}, \quad r_{2} = \frac{a_{2}\lambda^{1/2}}{\pi}, \quad
r_{3} = \frac{a_{3}\lambda^{1/2}}{\pi},
\end{equation*}
and $\vert E(\lambda) \vert = \frac{4}{3\pi^{2}}\lambda^{3/2}$.

By \eqref{ei1} and \eqref{e2.1a}, we see that the Dirichlet eigenvalues $\lambda_1(R_{a_1,a_2,a_3}), \dots,
\lambda_k(R_{a_1,a_2,a_3})$ (counted with multiplicities) correspond to the integer lattice
points that are inside or on the ellipsoid $E(\lambda_k)$ in the first octant (excluding the
coordinate planes). Thus, in order to minimise $\lambda_k$ among all cuboids given by \eqref{ei0},
we wish to determine the $3$-dimensional ellipsoid $E(\lambda) \subset \R^3$ of minimal measure which
encloses $k$ integer lattice points in the first quadrant (excluding the coordinate planes).

For $n \in \N$, $n \geq 2$, estimates for the number of integer lattice points which are inside or on an
$n$-dimensional ellipsoid have been widely studied from a number theoretical viewpoint. However, in order to use
these estimates, it is crucial that the corresponding cuboids are bounded as $k \rightarrow \infty$.
As in the $2$-dimensional case, this is the most difficult part of the proof.

Following the strategy given in \cite{AF}, the first goal is to prove that the side-lengths of an optimal sequence of
cuboids $\big(R_{a_{1,k}^*, a_{2,k}^* ,a_{3,k}^*}\big)_k$ are uniformly bounded in $k$.  To do this, we obtain an upper bound for the
counting function $N(\lambda) = \# \{j \in \N : \lambda_j \leq \lambda\}$. This relies upon some key
lemmata which we prove in Section~\ref{S1}. In Section~\ref{bounded}, we prove that the optimal
cuboids are uniformly bounded. We are then able to use known estimates for the number of integer lattice
points inside and on an ellipsoid in Section~\ref{convergence}. This allows us to deduce Theorem~\ref{T1}(ii).

\section{Proof of Theorem~\ref{T1}(i).}\label{ProofT1i}

\begin{proof}
Fix $k \in \N$. Suppose that $\big\{R_{a_{1,k}^{(\ell)},a_{2,k}^{(\ell)},a_{3,k}^{(\ell)}}\big\}_{\ell \in \N}$ is a minimising sequence for $\lambda_{k}$ such that $a_{3,k}^{(\ell)} \rightarrow \infty$ as $\ell \rightarrow \infty$. In order to preserve the measure constraint $a_{1,k}^{(\ell)} \rightarrow 0$ as $\ell \rightarrow \infty$. So, we have that
\begin{equation*}%\label{e4.2a}
\lambda_{k}\big(R_{a_{1,k}^{(\ell)},a_{2,k}^{(\ell)},a_{3,k}^{(\ell)}}\big) > \frac{\pi^{2}}{(a_{1,k}^{(\ell)})^{2}}
\rightarrow \infty, \textup{ as } \ell \rightarrow \infty.
\end{equation*}
However, for the unit cube in $\R^3$, $\lambda_k \leq 3\pi^2 k^2<+\infty$.
This contradicts the assumption that $\big\{R_{a_{1,k}^{(\ell)},a_{2,k}^{(\ell)},a_{3,k}^{(\ell)}}\big\}_{\ell \in \N}$
is a minimising sequence for $\lambda_{k}$. So any minimising sequence $\big\{R_{a_{1,k}^{(\ell)},a_{2,k}^{(\ell)},a_{3,k}^{(\ell)}}\big\}_{\ell \in \N}$ for $\lambda_{k}$ is such that
$a_{1,k}^{(\ell)}, a_{2,k}^{(\ell)}, a_{3,k}^{(\ell)}$ are bounded as $\ell \rightarrow \infty$. Hence,
for each $i \in \{1,2,3\}$, there exists a convergent subsequence, again denoted by
$a_{i,k}^{(\ell)}$ such that  $a_{i,k}^{(\ell)}\rightarrow a_{i,k}^*$ for some $a_{i,k}^*\in (0,\infty)$.
Since $(a_{1},a_{2},a_{3})\mapsto \lambda_{k}(R_{a_{1}, a_{2}, a_{3}})$ is
continuous, $\lambda_k\big(R_{a_{1,k}^{(\ell)},a_{2,k}^{(\ell)},a_{3,k}^{(\ell)}}\big)\rightarrow \lambda_k\big(R_{a_{1,k}^*,a_{2,k}^*, a_{3,k}^*}\big)$ as $\ell \rightarrow \infty$.
Hence $R_{a_{1,k}^*,a_{2,k}^*,a_{3,k}^*}$ is a minimising cuboid for $\lambda_k$.
\end{proof}

It is not difficult to see that the above argument can also be used to prove the existence of a minimising
cuboid for $\lambda_{k}$ in $\R^{m}$ with $m \geq 4$.

\section{Key lemmata to prove boundedness of an optimal cuboid.}\label{S1}
The following lemma is crucial in the arguments that follow.
\begin{lemma}\label{L1}
Let $y \geq 0$, $a \geq 0$. For $n \in \{1, 2\}$, we have that
\begin{equation}\label{e1a}
\sum_{i=1}^{\left \lfloor{\frac{y^{1/2}}{a}}\right \rfloor}(y-a^{2}i^{2})^{n/2}
\leq \frac{\sqrt{\pi}}{2a}\frac{\Gamma\left(\frac{n+2}{2}\right)}{\Gamma\left(\frac{n+3}{2}\right)}y^{(n+1)/2} - \frac12 y^{n/2}
+\frac{(2an)^{n/2}}{(n+2)^{(n+2)/2}}y^{n/4}.
\end{equation}
\end{lemma}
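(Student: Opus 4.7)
My plan is to approximate the sum $\sum_{i=1}^{N}(y-a^{2}i^{2})^{n/2}$, with $N:=\lfloor y^{1/2}/a\rfloor$, by the integral $\int_{0}^{L}(y-a^{2}t^{2})^{n/2}\,dt$ where $L:=y^{1/2}/a$. The substitution $u=at/y^{1/2}$ together with the standard beta-function identity $\int_{0}^{1}(1-u^{2})^{n/2}\,du=\tfrac{\sqrt{\pi}}{2}\,\Gamma((n+2)/2)/\Gamma((n+3)/2)$ show that this integral equals precisely the leading term on the right-hand side of \eqref{e1a}. The key analytic input is that, for $n\in\{1,2\}$, the function $f(t):=(y-a^{2}t^{2})^{n/2}$ is concave on $[0,L]$, being the composition of the concave map $t\mapsto y-a^{2}t^{2}$ with the concave, non-decreasing map $s\mapsto s^{n/2}$ (concave because $n/2\leq 1$), with boundary values $f(0)=y^{n/2}$ and $f(L)=0$. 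Write $\delta:=L-N\in[0,1)$.

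Assume first $N\geq 1$. The trapezoidal inequality $\int_{i}^{i+1}f\geq\tfrac{1}{2}(f(i)+f(i+1))$ holds for concave $f$; applying it on each interval $[i,i+1]$ with $1\leq i\leq N-1$ and summing gives $\sum_{i=1}^{N}f(i)\leq\int_{1}^{N}f+\tfrac{1}{2}(f(1)+f(N))$. Splitting $\int_{0}^{L}=\int_{0}^{1}+\int_{1}^{N}+\int_{N}^{L}$ and rearranging rewrites this as
\begin{equation*}
\sum_{i=1}^{N}f(i)\leq\int_{0}^{L}f(t)\,dt-\tfrac{1}{2}f(0)+\Bigl[\tfrac{1}{2}(f(0)+f(1))-\int_{0}^{1}f\Bigr]+\Bigl[\tfrac{1}{2}f(N)-\int_{N}^{L}f\Bigr].
\end{equation*}
The first bracketed term is non-positive, again by the trapezoidal inequality on $[0,1]$, so the entire argument reduces to bounding the second bracket by $\frac{(2an)^{n/2}}{(n+2)^{(n+2)/2}}y^{n/4}$.

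For this last step, concavity of $f$ on $[N,L]$ with $f(L)=0$ implies that $f$ dominates its chord from $(N,f(N))$ to $(L,0)$, so $\int_{N}^{L}f\geq f(N)\delta/2$ and hence $\tfrac{1}{2}f(N)-\int_{N}^{L}f\leq\tfrac{1}{2}f(N)(1-\delta)$. Using $y=a^{2}L^{2}$, I obtain $y-a^{2}N^{2}=a^{2}(L-N)(L+N)\leq 2a^{2}L\delta=2ay^{1/2}\delta$, whence $f(N)\leq(2ay^{1/2})^{n/2}\delta^{n/2}$; the second bracket is therefore at most $\tfrac{1}{2}(2ay^{1/2})^{n/2}\delta^{n/2}(1-\delta)$. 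A one-variable maximisation gives $\max_{\delta\in[0,1]}\delta^{n/2}(1-\delta)=\frac{2n^{n/2}}{(n+2)^{(n+2)/2}}$, attained at $\delta=n/(n+2)$, and substituting this produces exactly the stated correction. The degenerate case $N=0$ (i.e.\ $y<a^{2}$) makes the left-hand side zero and reduces the problem to non-negativity of the right-hand side, a short direct check in the single variable $s:=y^{1/2}/a\in[0,1)$. The crux of the proof, and the main obstacle, is the precise pairing between the bound $f(N)\leq(2ay^{1/2}\delta)^{n/2}$ and the extremum of $\delta^{n/2}(1-\delta)$, which together yield the sharp constant appearing in \eqref{e1a}; this matching is available only when $n/2\leq 1$, which is why the lemma is restricted to $n\in\{1,2\}$.
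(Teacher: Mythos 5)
Your proof is correct and is essentially the same as the paper's. The paper packages the same estimate geometrically (area of inscribed rectangles $\leq$ area under the concave decreasing curve minus the inscribed triangles, where the triangle areas telescope to $\tfrac12 R^n - \tfrac12(1+\lfloor R\rfloor - R)g(\lfloor R\rfloor)$), whereas you apply the same trapezoidal inequalities interval-by-interval and isolate the two boundary brackets, then bound $\tfrac12 f(N)(1-\delta)$ via $f(N)\leq (2ay^{1/2}\delta)^{n/2}$ and the optimisation of $\delta^{n/2}(1-\delta)$ exactly as in the paper's \eqref{e7}.
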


\begin{proof}
We have that
\begin{equation}\label{e2}
\sum_{i=1}^{\left \lfloor{\frac{y^{1/2}}{a}}\right \rfloor}(y-a^{2}i^{2})^{n/2}
= a^{n}\sum_{i=1}^{\left \lfloor{\frac{y^{1/2}}{a}}\right \rfloor}\bigg(\left(\frac{y^{1/2}}{a}\right)^2
-i^{2}\bigg)^{n/2}.
\end{equation}
Let $R=\frac{y^{1/2}}{a}$ and consider $\sum_{i=1}^{\left \lfloor{R}\right \rfloor}g(i)$
where
\begin{equation}\label{e2a}
g(i)=(R^{2}-i^{2})^{n/2}.
\end{equation}
Then, for $0 \leq i \leq R$, we have that
\begin{align*}
g'(i)&= -ni(R^{2}-i^{2})^{(n-2)/2} \leq 0, %\label{e3}
\\
g''(i) &= n(R^{2}-i^{2})^{(n-4)/2} ((n-1)i^{2} - R^{2}) \leq 0. %\label{e4}
\end{align*}
So $i\mapsto g(i)$ is decreasing on $[0,R]$ and, since $n=1$ or $n=2$, $g$ is also concave on $[0,R]$.
We note that since $g$ is decreasing, $\sum_{i=1}^{\left \lfloor{R}\right \rfloor} g(i)$
is the total area of the rectangles of width 1 and height $g(i)$, $i \in \left\{1, \dots, \left \lfloor{R}\right \rfloor\right\}$,
which are inscribed in the curve $g(x)$ for $0 \leq x \leq R$.
Due to the concavity of $g$ on $(0,R)$, we can bound
$\sum_{i=1}^{\left \lfloor{R}\right \rfloor} g(i)$ from
above by the area under $g$ minus the area of the inscribed triangles
which sit on top of the aforementioned rectangles. That is
\begin{equation}
\sum_{i=1}^{\left \lfloor{R}\right \rfloor} g(i)
\leq \int_{0}^{R} g(i) \, di
-\frac{1}{2}\sum_{i=1}^{\left \lfloor{R}\right \rfloor}(g(i-1)-g(i))
-\frac{1}{2}\left(R-\left \lfloor{R}\right \rfloor\right)
g\left(\left \lfloor{R}\right \rfloor \right). \label{e5}
\end{equation}
We have that
\begin{align}
\int_{0}^{R} g(i) \, di
&= R^{n+1}\int_{0}^{1}(1-t^2)^{n/2} \, dt \notag\\
&= \frac{R^{n+1}}{2}\int_{0}^{1}(1-s)^{n/2} \frac{1}{\sqrt{s}}\, ds
= \frac{\sqrt{\pi}}{2}\frac{\Gamma\left(\frac{n+2}{2}\right)}{\Gamma\left(\frac{n+3}{2}\right)}R^{n+1}, \label{e6a}
\end{align}
where we have used [3.191.3, 8.384.1, \cite {GR}].

We also have that
\begin{align}
&-\frac{1}{2}\sum_{i=1}^{\left \lfloor{R}\right \rfloor}
(g(i-1)-g(i))-\frac{1}{2}\left(R-\left \lfloor{R}\right \rfloor\right)
g\left(\left \lfloor{R}\right \rfloor \right) \notag\\
&=-\frac{1}{2}R^{n} + \frac{1}{2}(1 + \left \lfloor{R}\right \rfloor - R)
\big(R^{2} - \left \lfloor{R}\right \rfloor^{2}\big)^{n/2} \notag\\
&=-\frac{1}{2}R^{n} + \frac{1}{2}(1 + \left \lfloor{R}\right \rfloor - R)
(R + \left \lfloor{R}\right \rfloor)^{n/2}
(R - \left \lfloor{R}\right \rfloor)^{n/2} \notag\\
&\leq -\frac{1}{2}R^{n} +\frac{1}{2}(2R)^{n/2} \max_{0 \leq \beta < 1} (1-\beta)\beta^{n/2} \notag\\
&= -\frac{1}{2}R^{n} + \frac{(2n)^{n/2}}{(n+2)^{(n+2)/2}} R^{n/2}. \label{e7}
\end{align}
Combining \eqref{e2}, \eqref{e5}, \eqref{e6a} and \eqref{e7} gives \eqref{e1a}.
\end{proof}

Applying the previous lemma with $n=1$, $y=\frac{a_2^2}{\pi^2}\lambda$, and $a=\frac{a_2}{a_1}$,
we recover the result of Theorem 3.1 from \cite{AF}. Since $g$ (as in \eqref{e2a}) is decreasing
on $[0,\frac{y^{1/2}}{a}]$, the following holds for all $n \in \N$.
\begin{lemma}\label{C1}
Let $y\geq 0$, $a \geq 0$. For $n \in \N$, we have that
\begin{equation*}
\sum_{i=1}^{\left \lfloor{\frac{y^{1/2}}{a}}\right \rfloor} (y - a^{2}i^{2})^{n/2} \notag\\
\leq \int_{0}^{\frac{y^{1/2}}{a}} (y-a^{2}i^{2})^{n/2} \, di =
\frac{\sqrt{\pi}}{2a}\frac{\Gamma\left(\frac{n+2}{2}\right)}{\Gamma\left(\frac{n+3}{2}\right)}y^{(n+1)/2}.
\end{equation*}
\end{lemma}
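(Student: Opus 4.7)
The plan is to rely only on monotonicity of the integrand rather than concavity, which is why the estimate works for all $n\in\N$ (the concavity hypothesis in the proof of Lemma \ref{L1} forced the restriction to $n\in\{1,2\}$).

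First I would set $R=y^{1/2}/a$ and $g(i)=(R^2-i^2)^{n/2}$ (so that $\sum_{i=1}^{\lfloor R\rfloor}(y-a^2i^2)^{n/2}=a^n\sum_{i=1}^{\lfloor R\rfloor}g(i)$, mirroring \eqref{e2}). The derivative $g'(i)=-ni(R^2-i^2)^{(n-2)/2}$ is non-positive on $[0,R]$ for every $n\in\N$, so $g$ is non-increasing on $[0,R]$. For a non-increasing non-negative function, the rectangle of width $1$ and height $g(i)$ sitting under the graph on the interval $[i-1,i]$ satisfies $g(i)\le\int_{i-1}^{i}g(x)\,dx$. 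Summing over $i=1,\ldots,\lfloor R\rfloor$ gives
\[
\sum_{i=1}^{\lfloor R\rfloor}g(i)\le\int_0^{\lfloor R\rfloor}g(x)\,dx\le\int_0^R g(x)\,dx,
\]
the last inequality because $g\ge 0$ on $[\lfloor R\rfloor,R]$.

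Next I would evaluate the integral exactly as in \eqref{e6a}: the substitution $i=Rt$ yields
\[
\int_0^R(R^2-i^2)^{n/2}\,di=R^{n+1}\int_0^1(1-t^2)^{n/2}\,dt,
\]
and then $s=t^2$ together with the Beta integral identity $B\!\left(\tfrac12,\tfrac{n+2}{2}\right)=\sqrt{\pi}\,\Gamma\!\left(\tfrac{n+2}{2}\right)/\Gamma\!\left(\tfrac{n+3}{2}\right)$ (cf.\ 3.191.3, 8.384.1 in \cite{GR}) gives
\[
\int_0^R(R^2-i^2)^{n/2}\,di=\frac{\sqrt{\pi}}{2}\frac{\Gamma\!\left(\frac{n+2}{2}\right)}{\Gamma\!\left(\frac{n+3}{2}\right)}R^{n+1}.
\]
Multiplying by $a^n$ and substituting $R=y^{1/2}/a$ produces the claimed bound $\frac{\sqrt\pi}{2a}\frac{\Gamma((n+2)/2)}{\Gamma((n+3)/2)}y^{(n+1)/2}$; reverting the substitution $i\mapsto ai$ in the integral also gives the stated middle expression $\int_0^{y^{1/2}/a}(y-a^2i^2)^{n/2}\,di$.

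There is essentially no obstacle here: the only subtle point, and the reason for stating the lemma separately from Lemma \ref{L1}, is that by throwing away the correction terms $-\tfrac12 y^{n/2}$ and $(2an)^{n/2}(n+2)^{-(n+2)/2}y^{n/4}$ one no longer needs concavity, so the bound is valid for every $n\in\N$ rather than only $n\in\{1,2\}$.
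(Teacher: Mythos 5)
Your proof is correct and takes essentially the same approach as the paper: the paper's entire justification for Lemma~\ref{C1} is the one-line observation that $g$ is decreasing on $[0,y^{1/2}/a]$ for all $n\in\N$, which is precisely the monotonicity-based integral comparison (sum of a non-increasing function bounded by its integral) that you spell out in detail, followed by the same Beta-function evaluation as in \eqref{e6a}.
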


\section{Uniform boundedness of an optimal cuboid.}\label{bounded}
With $E(\lambda)$ as defined in \eqref{e2.1a}, we define the counting function
\begin{equation*}
N(\lambda):= \# \{j \in \N : \lambda_j(R_{a_1,a_2,a_3}) \leq \lambda \}
= \# \{(i_{1}, i_{2},i_{3}) \in \N^{3} \cap E(\lambda)\}.
%\label{e2.1b}
\end{equation*}
We now use the results of Section~\ref{S1} to obtain an upper bound for $N(\lambda)$.

\begin{lemma}\label{L2}
For $\lambda \geq 0$ and $a_1 \leq a_2 \leq a_3$, $E(\lambda)$, $N(\lambda)$ as above, we have that
\begin{equation}\label{e2.6}
N(\lambda) \leq \frac{\lambda^{3/2}}{6\pi^{2}} - \frac{\lambda}{8\pi a_{1}} +
\frac{\lambda^{1/2}}{16 a_{1}^{2}}.
\end{equation}
\end{lemma}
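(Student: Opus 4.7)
The strategy is to peel $N(\lambda)$ as an iterated triple sum over lattice points in $\N^3 \cap E(\lambda)$, applying the preceding lemmas to each coordinate in turn. For each $(i_1,i_2) \in \N^2$ with $\pi^2 i_1^2/a_1^2 + \pi^2 i_2^2/a_2^2 \le \lambda$, the number of admissible $i_3 \in \N$ is bounded, after dropping the floor, by $(a_3/\pi)\bigl(\lambda - \pi^2 i_1^2/a_1^2 - \pi^2 i_2^2/a_2^2\bigr)^{1/2}$. Writing $\mu := \lambda - \pi^2 i_1^2/a_1^2$, this gives
\begin{equation*}
N(\lambda) \le \frac{a_3}{\pi}\sum_{i_1=1}^{\lfloor a_1\lambda^{1/2}/\pi\rfloor}\sum_{i_2=1}^{\lfloor a_2\mu^{1/2}/\pi\rfloor}\bigl(\mu - \pi^2 i_2^2/a_2^2\bigr)^{1/2}.
\end{equation*}

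Next I would apply Lemma~\ref{C1} with $n=1$, $y=\mu$, $a=\pi/a_2$ to collapse the inner sum: since $\Gamma(3/2)/\Gamma(2)=\sqrt{\pi}/2$, the bound is exactly $(a_2/4)\mu$. Substituting leaves
\begin{equation*}
N(\lambda) \le \frac{a_2 a_3}{4\pi}\sum_{i_1=1}^{\lfloor a_1\lambda^{1/2}/\pi\rfloor}\bigl(\lambda - \pi^2 i_1^2/a_1^2\bigr).
\end{equation*}
For the remaining outer sum I would invoke the sharper Lemma~\ref{L1} with $n=2$, $y=\lambda$, $a=\pi/a_1$, keeping all three terms of \eqref{e1a}: using $\Gamma(2)/\Gamma(5/2)=4/(3\sqrt{\pi})$, the bound is $\tfrac{2a_1}{3\pi}\lambda^{3/2} - \tfrac12\lambda + \tfrac{\pi}{4a_1}\lambda^{1/2}$. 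Multiplying by $a_2 a_3/(4\pi)$ and using $a_1 a_2 a_3 = 1$ (hence $a_2 a_3 = 1/a_1$) produces exactly the three terms $\lambda^{3/2}/(6\pi^2)$, $-\lambda/(8\pi a_1)$ and $\lambda^{1/2}/(16 a_1^2)$ of \eqref{e2.6}.

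There is no real obstacle beyond constant-tracking. The one strategic point worth flagging is the asymmetric choice of lemmas: the coarse Lemma~\ref{C1} suffices on the inner slice because its integral value already produces the correct leading coefficient $1/(6\pi^2)$, but Lemma~\ref{L1} must be used on the outer sum so that the $-\tfrac12 y^{n/2}$ term survives. After combining with the volume constraint $a_2 a_3 = 1/a_1$, that term becomes the crucial \emph{negative} middle term $-\lambda/(8\pi a_1)$ in \eqref{e2.6}, which is precisely what will force $a_1$ away from $0$ in the boundedness argument of Section~\ref{bounded}.
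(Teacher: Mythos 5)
Your argument is correct and is essentially the same as the paper's: you peel off the $i_3$ sum, collapse the $i_2$ sum with Lemma~\ref{C1} ($n=1$), apply the sharper Lemma~\ref{L1} ($n=2$) to the remaining $i_1$ sum, and use $a_1a_2a_3=1$; your choices $y=\mu,\ a=\pi/a_2$ and $y=\lambda,\ a=\pi/a_1$ are just rescalings of the paper's $y=\tfrac{a_3^2}{\pi^2}\lambda-\tfrac{a_3^2}{a_1^2}i_1^2,\ a=a_3/a_2$ and $y=\lambda/\pi^2,\ a=1/a_1$, and they produce the identical bound. The constant-tracking and the strategic remark about why Lemma~\ref{L1} is needed only on the outer sum are both accurate.
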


\begin{proof}
For $(i_1,i_2,i_3) \in \N^{3} \cap E(\lambda)$, we have that
\begin{equation*}
i_3 \leq \left\lfloor{ \left(\frac{a_3^2}{\pi^2}\lambda - \frac{a_3^2}{a_1^2}i_1^2 -\frac{a_3^2}{a_2^2}i_2^2\right)_{+}^{1/2}}\right \rfloor,
%\label{e2.2a}
\end{equation*}
where ``$+$'' denotes the positive part.
Hence
\begin{align}
N(\lambda) &\leq \sum_{i_{1} \in \N} \sum_{i_{2} \in \N}
\left \lfloor{\left(\frac{a_{3}^{2}}{\pi^{2}}\lambda - \frac{a_{3}^{2}}{a_{1}^{2}}i_{1}^{2}
-\frac{a_{3}^{2}}{a_{2}^{2}}i_{2}^{2}\right)_{+}^{1/2}}\right \rfloor \label{e2.3a}\\
&\leq \sum_{i_{1}=1}^{\left \lfloor{a_{1}\frac{\lambda^{1/2}}{\pi}}\right \rfloor}
\sum_{i_{2}=1}^{\left \lfloor{a_{2}\left(\frac{\lambda}{\pi^{2}}-\frac{i_{1}^{2}}{a_{1}^{2}}\right)^{1/2}}\right \rfloor}
\left(\frac{a_{3}^{2}}{\pi^{2}}\lambda - \frac{a_{3}^{2}}{a_{1}^{2}}i_{1}^{2}
-\frac{a_{3}^{2}}{a_{2}^{2}}i_{2}^{2}\right)^{1/2}. \label{e2.3}
\end{align}
Applying Lemma~\ref{C1} with $y=\frac{a_{3}^{2}}{\pi^{2}}\lambda - \frac{a_{3}^{2}}{a_{1}^{2}}i_{1}^{2}$,
$a=\frac{a_{3}}{a_{2}}$, $n=1$ to \eqref{e2.3}, we have that
\begin{equation}
N(\lambda)\leq \sum_{i_{1}=1}^{\left \lfloor{a_{1}\frac{\lambda^{1/2}}{\pi}}\right \rfloor}
\frac{\pi a_{2}}{4a_{3}} \left(\frac{a_{3}^{2}}{\pi^{2}}\lambda - \frac{a_{3}^{2}}{a_{1}^{2}}i_{1}^{2}
\right)
= \sum_{i_{1}=1}^{\left \lfloor{a_{1}\frac{\lambda^{1/2}}{\pi}}\right \rfloor} \frac{\pi a_{2}a_{3}}{4}
\left(\frac{\lambda}{\pi^{2}} - \frac{i_{1}^{2}}{a_{1}^{2}}\right).
\label{e2.4}
\end{equation}
Applying Lemma~\ref{L1} with $y=\frac{\lambda}{\pi^{2}}$, $a=\frac{1}{a_{1}}$, $n=2$, we obtain that
\begin{align}
\frac{\pi a_{2}a_{3}}{4} \sum_{i_{1}=1}^{\left \lfloor{a_{1}\frac{\lambda^{1/2}}{\pi}}\right \rfloor}
\left(\frac{\lambda}{\pi^{2}} - \frac{i_{1}^{2}}{a_{1}^{2}}\right)
&\leq \frac{\pi a_{2}a_{3}}{4}\left(\frac{2a_{1}}{3\pi^{3}}\lambda^{3/2} -\frac{1}{2\pi^{2}}\lambda
+\frac{1}{4\pi a_{1}}\lambda^{1/2}\right) \notag\\
&=\frac{\lambda^{3/2}}{6\pi^{2}} - \frac{\lambda}{8\pi a_{1}} +\frac{\lambda^{1/2}}{16 a_{1}^{2}}.
\label{e2.5}
\end{align}
By \eqref{e2.4} and \eqref{e2.5}, \eqref{e2.6} follows.
\end{proof}

We now prove that the side-lengths $a_{1,k}^{*}, a_{2,k}^{*}, a_{3,k}^{*},$ of an
optimal cuboid $R_{a_{1,k}^*,a_{2,k}^*,a_{3,k}^*}$ in $\R^3$ are uniformly bounded.

\begin{lemma}\label{L2.1}
For all $k \in \N$,
\begin{equation*}
a_{3,k}^* \leq 319.
\end{equation*}
\end{lemma}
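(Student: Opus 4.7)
The plan is to bound $a_{1,k}^*$ uniformly from below. Since $a_{1,k}^* \leq a_{2,k}^*$ and $a_{1,k}^* a_{2,k}^* a_{3,k}^* = 1$ give $a_{3,k}^* \leq 1/(a_{1,k}^*)^2$, it suffices to prove $a_{1,k}^* \geq 1/\sqrt{319}$ for every $k$. I would split the argument into two complementary regimes.

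For \emph{small $k$} I would use only crude estimates. Since the unit cube is a feasible competitor, $\lambda_k^* \leq \lambda_k(\text{cube})$; counting the $m^3$ triples $(i_1,i_2,i_3) \in \{1,\ldots,m\}^3$ with $m = \lceil k^{1/3}\rceil$ gives $\lambda_k(\text{cube}) \leq 3\pi^2 \lceil k^{1/3}\rceil^2$. Dropping the non-negative terms in \eqref{ei1} yields $\lambda_k^* \geq \lambda_1(R^*) \geq \pi^2/(a_{1,k}^*)^2$. Combining,
\[
a_{3,k}^* \leq \frac{1}{(a_{1,k}^*)^2} \leq \frac{\lambda_k^*}{\pi^2} \leq 3\lceil k^{1/3}\rceil^2 \leq 300,
\]
whenever $\lceil k^{1/3}\rceil \leq 10$, i.e.\ $k \leq 10^3$, which handles this range.

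For \emph{large $k$} the crude estimate is insufficient and I would invoke Lemma~\ref{L2}. Combined with $N(\lambda_k^*, R^*) \geq k$ it gives
\[
\frac{\lambda_k^*}{8\pi a_{1,k}^*} \leq \frac{(\lambda_k^*)^{3/2}}{6\pi^2} - k + \frac{(\lambda_k^*)^{1/2}}{16 (a_{1,k}^*)^2}.
\]
A sharp upper bound $\lambda_k(\text{cube}) \leq (6\pi^2)^{2/3}k^{2/3} + Ck^{1/3}$, obtainable from the expansion
\[
\#\{(i_1,i_2,i_3) \in \N^3 : i_1^2+i_2^2+i_3^2 \leq R^2\} = \frac{\pi R^3}{6} - \frac{3\pi R^2}{8} + O(R),
\]
then yields $(\lambda_k^*)^{3/2}/(6\pi^2) - k = O(k^{2/3})$. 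For a matching lower bound I would use the Berezin--Li--Yau inequality $\lambda_k^* \geq \frac{3}{5}(6\pi^2)^{2/3} k^{2/3}$. Dividing the displayed inequality by $k^{2/3}$ and absorbing the $(\lambda_k^*)^{1/2}/(a_{1,k}^*)^2$ contribution as a lower-order perturbation produces a uniform lower bound $a_{1,k}^* \geq c > 1/\sqrt{319}$ for all $k > 10^3$.

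The main obstacle will be the explicit tracking of numerical constants. The threshold $10^3$ in the small-$k$ regime and the bound $1/\sqrt{319}$ in the large-$k$ regime must be made mutually consistent; asymptotically the method actually gives a considerably sharper bound, but the worst case over the transition range is what fixes the value $319$.
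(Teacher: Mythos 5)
Your overall strategy matches the paper's: use Lemma~\ref{L2} to force a uniform lower bound on $a_{1,k}^*$, then pass to $a_{3,k}^* \leq 1/(a_{1,k}^*)^2$. The small-$k$ sub-argument is clean and correct (and not in the paper, which produces a single bound valid for all $k$), but the large-$k$ half has a genuine gap precisely where you flag the difficulty.

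The problem with ``absorbing the $(\lambda_k^*)^{1/2}/(16(a_{1,k}^*)^2)$ contribution as a lower-order perturbation'' is that $a_{1,k}^*$ also appears in that term, so its size cannot be controlled until after the lower bound on $a_{1,k}^*$ has been established: the step is circular as stated. The paper resolves this with the inequality $\lambda_k^* \geq \pi^2/(a_{1,k}^*)^2$ (which you already use in your small-$k$ regime), rewritten as
\begin{equation*}
\frac{(\lambda_k^*)^{1/2}}{16(a_{1,k}^*)^2} \leq \frac{\lambda_k^*}{16\pi a_{1,k}^*},
\end{equation*}
so this term can be subtracted from the left-hand side $\lambda_k^*/(8\pi a_{1,k}^*)$, leaving $\lambda_k^*/(16\pi a_{1,k}^*) \leq (\lambda_k^*)^{3/2}/(6\pi^2) - k$ and hence $a_{1,k}^* \geq \frac{1}{16\pi}\cdot \frac{6\pi^2 \lambda_k^*}{(\lambda_k^*)^{3/2}-6\pi^2 k}$. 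Because this ratio is decreasing in $\lambda_k^*$ (equivalently, the reciprocal ratio on the left of \eqref{e2.9} is increasing), one may replace $\lambda_k^*$ by the cube eigenvalue $\nu_k \geq \lambda_k^*$. You need an analogous observation; dividing by $k^{2/3}$ does not by itself remove the $a_{1,k}^*$-dependence. The second gap is the one you identify yourself: the final step requires an \emph{explicit} upper bound on $(\nu_k^{3/2}-6\pi^2k)/\nu_k$, and the paper gets this not from an $O(\cdot)$ lattice-point expansion or Berezin--Li--Yau, but from an elementary Gauss-type lower bound $N(\nu_k) \geq \nu_k^{3/2}/(6\pi^2) - 3^{1/2}\nu_k/(2\pi)$ together with the crude multiplicity bound $\Theta_k \leq \nu_k/(4\pi)$ (project lattice points on the sphere of radius $\nu_k^{1/2}/\pi$ onto a coordinate plane). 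This yields $\nu_k^{3/2} \leq 6\pi^2 k + 3\pi\nu_k\bigl(\tfrac12+3^{1/2}\bigr)$ and hence $a_{1,k}^*\geq \bigl(8(\tfrac12+3^{1/2})\bigr)^{-1}$, from which $a_{3,k}^*\leq 64\bigl(\tfrac12+3^{1/2}\bigr)^2\leq 319$. Without nailing these constants down, the bound $319$ does not follow, and you cannot match it against your threshold $k\le 10^3$.
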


\begin{proof}
Since \eqref{e2.6} holds for all $\lambda \geq 0$ and all cuboids, it holds for $\lambda = \lambda_{k}^*$
and an optimal cuboid $R_{a_{1,k}^*,a_{2,k}^*,a_{3,k}^*}$, so
\begin{equation*}%\label{e2.7}
k \leq N(\lambda_k^*) \leq \frac{(\lambda_{k}^*)^{3/2}}{6\pi^{2}} - \frac{\lambda_{k}^*}{8\pi a_{1,k}^*} +
\frac{(\lambda_{k}^*)^{1/2}}{16 (a_{1,k}^*)^{2}},
\end{equation*}
and, by rearranging, we obtain that
\begin{equation}\label{e2.9}
\frac{(\lambda_{k}^{*})^{3/2}-6\pi^{2}k}{6\pi^{2}\lambda_{k}^{*}} \geq
\frac{1}{8\pi a_{1,k}^{*}} - \frac{(\lambda_{k}^{*})^{-1/2}}{16 (a_{1,k}^{*})^{2}}.
\end{equation}
The left-hand side of \eqref{e2.9} is an increasing function of $\lambda_{k}^{*}$, so it is bounded from
above by $\frac{\nu_{k}^{3/2}-6\pi^{2}k}{6\pi^{2}\nu_{k}}$, where $\nu_{k}$ is the $k$'th Dirichlet eigenvalue
of the Laplacian on the unit cube in $\R^3$. We obtain a lower bound for the right-hand side of $\eqref{e2.9}$
by using the fact that
\begin{equation*}%\label{e2.11}
\lambda_{k}^{*} \geq \lambda_{1}(R_{a_{1,k}^*,a_{2,k}^*,a_{3,k}^*}) \geq \frac{\pi^{2}}{(a_{1,k}^{*})^{2}},
\end{equation*}
implies that
\begin{equation}
-\frac{(\lambda_{k}^{*})^{-1/2}}{16(a_{1,k}^{*})^{2}} \geq -\frac{1}{16\pi a_{1,k}^{*}}. \label{e2.12}
\end{equation}
Hence, by \eqref{e2.12}, we have that
\begin{equation*}%\label{e2.14}
\frac{\nu_{k}^{3/2}-6\pi^{2}k}{6\pi^{2}\nu_{k}} \geq
\frac{1}{16\pi a_{1,k}^{*}},
\end{equation*}
which implies that,
\begin{equation}\label{e2.15}
a_{1,k}^{*} \geq \frac{1}{16\pi} \frac{6\pi^{2}\nu_{k}}{\nu_{k}^{3/2}-6\pi^{2}k}.
\end{equation}

We now obtain a uniform lower bound for $a_{1,k}^*$. Let $\omega_3$ denote the measure of a ball of radius 1
in $\R^3$. Then, by an estimate of Gauss, we have that
\begin{equation*}%\label{e2.15a}
N(\nu_k) = \# \bigg\{(i_1,i_2,i_3) \in \N^3 : i_1^2 + i_2^2 + i_3^2 \leq \frac{\nu_k}{\pi^2} \bigg\}
\geq \frac{\omega_3}{8}\bigg(\frac{\nu_k^{1/2}}{\pi} - 3^{1/2}\bigg)_{+}^{3}
\geq \frac{\nu_k^{3/2}}{6\pi^2} - \frac{3^{1/2} \nu_k}{2\pi}.
\end{equation*}
Let $\Theta_k$ denote the multiplicity of $\nu_k$. Then $N(\nu_k) \leq k + \Theta_k -1$. In addition,
$\Theta_k = \# \{(i_1,i_2,i_3) \in \N^3 : i_1^2 + i_2^2 + i_3^2 = \frac{\nu_k}{\pi^2}\}$
is the number of integer lattice points in the first octant that lie on the sphere in $\R^3$ which is centred at
$(0,0,0)$ and has radius $\frac{\nu_k^{1/2}}{\pi}$. By projection onto the plane $i_3 = 0$, each of these lattice
points corresponds to an integer lattice point which lies inside or on the circle $\{(i_1,i_2) \in \Z^2 : i_1^2 + i_2^2 = \frac{\nu_k}{\pi^2}\}$ in the first quadrant. The number of integer lattice points which lie inside or on this circle
is bounded from above by $\frac{\nu_k}{4\pi}$, i.e. the area inscribed by the circle in the first quadrant.
Thus we obtain that
\begin{equation}\label{e2.15b}
\nu_k^{3/2} \leq 6\pi^2 k + 3\pi \nu_k \bigg(\frac12 +3^{1/2}\bigg).
\end{equation}
Hence by \eqref{e2.15} and \eqref{e2.15b}, we have that
\begin{equation}\label{e2.15c}
a_{1,k}^* \geq \bigg(8\bigg(\frac12 + 3^{1/2}\bigg)\bigg)^{-1}.
\end{equation}
Using that $a_{1,k}^{*}\leq a_{2,k}^{*}\leq a_{3,k}^{*}$, $a_{1,k}^{*}a_{2,k}^{*}a_{3,k}^{*}=1$
and \eqref{e2.15c}, we deduce that
\begin{equation*}
a_{3,k}^* \leq \frac{1}{(a_{1,k}^*)^2} \leq 64 \bigg(\frac12 + 3^{1/2}\bigg)^2 \leq 319.
\end{equation*}
\end{proof}

The main obstructions to proving a corresponding result to Theorem~\ref{T1}(ii) in higher dimensions $m \geq 4$ are the following: (i) For $m \geq 4$ the corresponding upper bound for $N(\lambda)$ to \eqref{e2.3a} involves lattice point sums
$\sum_{i=1}^{\left \lfloor{R}\right \rfloor}g(i)$ with $g(i),\, R$ as in \eqref{e2a} and $n \geq 3$. For $n \geq 3$, $\frac{y^{1/2}}{a\sqrt{n-1}}$ is an inflection point of $g$ in $(0,\frac{y^{1/2}}{a})$ and so $g$ is not concave on $(0,\frac{y^{1/2}}{a})$. Thus, the above approach cannot be used to obtain an upper bound for the left-hand side of \eqref{e1a} when $n \geq 3$. (ii) The higher-dimensional equivalent of \eqref{e2.6} will contain more terms in the
right-hand side. The leading term in that right-hand side is the Weyl term. However, the lower order terms are bounds
which are uniform in $a_1$, for example. Their usefulness depends on the numerical coefficients which show up.
These in turn depend on lower dimensional lattice point sums.

\section{Proof of Theorem~\ref{T1}(ii).}\label{convergence}

The minimisers $R_{a_{1,k}^*,a_{2,k}^*,a_{3,k}^*}$ of $\lambda_k$ need not be unique.
From this point onwards, we consider an arbitrary subsequence of minimisers denoted by
$\big(R_{a_{1,k}^*,a_{2,k}^*,a_{3,k}^*}\big)_k$.

For $E(\lambda)$ as defined in \eqref{e2.1a}, we introduce the following notation.
\begin{align*}
%N(\lambda) &= \vert \{(x_{1},x_{2},x_{3}) \in \N^{3} \cap E(\lambda) \}\vert \label{e2.17a}\\
T(\lambda) &= \# \{(x_{1},x_{2},x_{3}) \in \Z^{3} \cap E(\lambda) \}, \\ %\label{e2.17b}
T_{x_{1}}(\lambda) &= \# \{(0,x_{2},x_{3}) \in  (\{0\} \times \Z^{2}) \cap E(\lambda) \}, \\ %\label{e2.17c}
T_{x_{1}}^{+}(\lambda) &= \# \{(0,x_{2},x_{3}) \in (\{0\} \times \N^{2}) \cap E(\lambda) \}. %\label{e2.17d}
\end{align*}
$T(\lambda)$ is the total number of integer lattice points that are inside or on the ellipsoid
$E(\lambda)$ in $\R^3$. Similarly $T_{x_1}(\lambda)$ is the number of integer lattice points that
are inside or on the ellipse in $\R^2$ which is centred at $(0,0)$ and has semi-axes $\frac{a_2 \lambda^{1/2}}{\pi}$,
$\frac{a_3 \lambda^{1/2}}{\pi}$. $T_{x_1}^{+}(\lambda)$ is the number of these lattice points that lie in
the first quadrant (excluding the axes). $T_{x_2}(\lambda)$, $T_{x_2}^{+}(\lambda)$ etc. are defined similarly.
Thus, we have that
\begin{align*}
T(\lambda) &= 8N(\lambda) + 4T_{x_{1}}^{+}(\lambda) + 4T_{x_{2}}^{+}(\lambda) + 4T_{x_{3}}^{+}(\lambda) \notag\\
&\ \ \ +2\left \lfloor{\frac{a_{1}\lambda^{1/2}}{\pi}}\right\rfloor +2\left \lfloor{\frac{a_{2}\lambda^{1/2}}{\pi}}\right\rfloor
+2\left \lfloor{\frac{a_{3}\lambda^{1/2}}{\pi}}\right\rfloor + 1,
%\label{e2.19}
\end{align*}
which implies that
\begin{align*}
N(\lambda) &= \frac{1}{8}T(\lambda) -\frac{1}{2}T_{x_{1}}^{+}(\lambda)-\frac{1}{2}T_{x_{2}}^{+}(\lambda)
-\frac{1}{2}T_{x_{3}}^{+}(\lambda) \notag\\
&\ \ \ -\frac{1}{4}\left \lfloor{\frac{a_{1}\lambda^{1/2}}{\pi}}\right\rfloor-\frac{1}{4}\left \lfloor{\frac{a_{2}\lambda^{1/2}}{\pi}}\right\rfloor
-\frac{1}{4}\left \lfloor{\frac{a_{3}\lambda^{1/2}}{\pi}}\right\rfloor-\frac{1}{8}.
%\label{e2.20a}
\end{align*}
In addition, we have that
\begin{equation*}%\label{e2.20b}
T_{x_{1}}(\lambda)  = 4T_{x_{1}}^{+}(\lambda)  + 2\left \lfloor{\frac{a_{2}\lambda^{1/2}}{\pi}}\right\rfloor
+ 2\left \lfloor{\frac{a_{3}\lambda^{1/2}}{\pi}}\right\rfloor +1,
\end{equation*}
which implies that
\begin{equation*}%\label{e2.20c}
T_{x_{1}}^{+}(\lambda)=\frac{1}{4}T_{x_{1}}(\lambda) -\frac{1}{2}\left \lfloor{\frac{a_{2}\lambda^{1/2}}{\pi}}\right\rfloor
-\frac{1}{2}\left \lfloor{\frac{a_{3}\lambda^{1/2}}{\pi}}\right\rfloor -\frac{1}{4},
\end{equation*}
and similarly for $T_{x_{2}}^{+}(\lambda), T_{x_{3}}^{+}(\lambda)$.
Thus, we obtain
\begin{align}\label{e2.20d}
N(\lambda) &= \frac{1}{8}T(\lambda) -\frac{1}{8}T_{x_{1}}(\lambda) -\frac{1}{8}T_{x_{2}}(\lambda)
-\frac{1}{8}T_{x_{3}}(\lambda) \notag\\
&\ \ \ + \frac{1}{4}\left \lfloor{\frac{a_{1}\lambda^{1/2}}{\pi}}\right\rfloor +\frac{1}{4}\left \lfloor{\frac{a_{2}\lambda^{1/2}}{\pi}}\right\rfloor
+ \frac{1}{4}\left \lfloor{\frac{a_{3}\lambda^{1/2}}{\pi}}\right\rfloor +\frac{1}{4}.
\end{align}
Below we use this expression for $N(\lambda)$ in order to prove Theorem~\ref{T1}(ii).

\noindent{\it Proof of Theorem~\ref{T1}(ii).}
By setting $\lambda = \lambda_k^*$ in \eqref{e2.20d} and considering an optimal cuboid
$R_{a_{1,k}^*,a_{2,k}^*,a_{3,k}^*}$, we have that
\begin{align}\label{e2.20e}
k \leq N(\lambda_k^*) &= \frac{1}{8}T(\lambda_k^*) -\frac{1}{8}T_{x_{1}}(\lambda_k^*) -\frac{1}{8}T_{x_{2}}(\lambda_k^*)
-\frac{1}{8}T_{x_{3}}(\lambda_k^*) \notag\\
&\ \ \ + \frac{1}{4}\left \lfloor{\frac{a_{1,k}^*(\lambda_k^*)^{1/2}}{\pi}}\right\rfloor +\frac{1}{4}\left \lfloor{\frac{a_{2,k}^*(\lambda_k^*)^{1/2}}{\pi}}\right\rfloor
+ \frac{1}{4}\left \lfloor{\frac{a_{3,k}^*(\lambda_k^*)^{1/2}}{\pi}}\right\rfloor +\frac{1}{4}.
\end{align}

By Lemma~\ref{L2.1}, the $\{a_{1,k}^*, a_{2,k}^*, a_{3,k}^*\}$ are uniformly bounded,
so it is possible to make use of known estimates for the number of integer lattice
points that are inside or on a $3$-dimensional ellipsoid or a $2$-dimensional ellipse.
In particular there exists $C<\infty$ such that for all $\lambda \geq 0$
\begin{equation}\label{er2a}
\frac{4}{3\pi^{2}}\lambda^{3/2} - C\lambda^{\beta/2} \leq T(\lambda) \leq \frac{4}{3\pi^{2}}\lambda^{3/2} + C\lambda^{\beta/2} +1,
\end{equation}
where $\beta$ is as defined in the Introduction.
Similarly there exists $D<\infty$ such that for all $\lambda \geq 0$
\begin{equation}\label{er2b}
\frac{a_{2}a_{3}}{\pi}\lambda - D\lambda^{\theta/2} \leq T_{x_{1}}(\lambda) \leq \frac{a_{2}a_{3}}{\pi}\lambda + D\lambda^{\theta/2} +1,
\end{equation}
where $\theta$ is the exponent of the remainder in Gauss' circle problem
$$\#\{(i_1,i_2)\in \Z^2:i_1^2+i_2^2 \leq R^2\}-\pi R^2=O(R^{\theta})\, , R \ra \infty. $$
The best known estimate to date is $\theta > \frac{131}{208}$, see the Introduction in \cite{H03}.
Hence the formula above holds for $\theta=\frac{131}{208}+\epsilon$ for any $\epsilon > 0$.
The corresponding inequalities to \eqref{er2b} also hold for $T_{x_{2}}(\lambda) , T_{x_{3}}(\lambda) $.
Using these inequalities and \eqref{e2.20e}, we obtain the following upper bound for $N(\lambda_k^*)$.
\begin{align}
k \leq N(\lambda_k^*)&\leq \frac{(\lambda_k^*)^{3/2}}{6\pi^{2}} -\frac{1}{8\pi}\left(\frac{1}{a_{1,k}^*} +\frac{1}{a_{2,k}^*} +\frac{1}{a_{3,k}^*}\right)
\lambda_k^* +\frac{C}{8}(\lambda_k^*)^{\beta/2} \notag\\
&\ \ \ +\frac{1}{4\pi}(a_{1,k}^*+a_{2,k}^*+a_{3,k}^*)(\lambda_k^*)^{1/2} +\frac{3D}{8}(\lambda_k^*)^{\theta/2} +\frac{3}{8}. \label{e2.22}
\end{align}
Rearranging \eqref{e2.22}, we obtain that
\begin{align*}
\frac{1}{a_{1,k}^{*}} +\frac{1}{a_{2,k}^{*}} +\frac{1}{a_{3,k}^{*}} &\leq 8\pi\left(\frac{(\lambda_{k}^{*})^{3/2}-6\pi^{2}k}{6\pi^{2}\lambda_{k}^{*}}\right)
+\pi C(\lambda_k^*)^{-(2 - \beta)/2} \notag\\
&\ \ \ +2(a_{1,k}^*+a_{2,k}^*+a_{3,k}^*)(\lambda_k^*)^{-1/2} + 3\pi D(\lambda_k^*)^{-(2 - \theta)/2} + 3\pi(\lambda_k^*)^{-1}. %\label{e2.23b}
\end{align*}
Since $\frac{(\lambda_{k}^{*})^{3/2}-6\pi^{2}k}{6\pi^{2}\lambda_{k}^{*}}$ is an increasing function of $\lambda_{k}^{*}$, we can replace $\lambda_{k}^{*}$ by $\nu_{k}$, where $\nu_{k}$ is the $k$'th Dirichlet
eigenvalue of the Laplacian on the unit cube in $\R^3$. Thus, by P\'olya's Inequality $\lambda_k^* \geq (6\pi^2k)^{2/3}$, (\cite{P, U84}), we obtain
\begin{align}
\frac{1}{a_{1,k}^{*}} +\frac{1}{a_{2,k}^{*}} +\frac{1}{a_{3,k}^{*}} &\leq 8\pi\left(\frac{\nu_{k}^{3/2}-6\pi^{2}k}{6\pi^{2}\nu_{k}}\right)
+\pi C(\lambda_k^*)^{-(2 - \beta)/2} + 3\pi(\lambda_k^*)^{-1} \notag\\
&\ \ \ +2(a_{1,k}^*+a_{2,k}^*+a_{3,k}^*)(\lambda_k^*)^{-1/2} + 3\pi D(\lambda_k^*)^{-(2 - \theta)/2} \notag\\
&\leq 8\pi\left(\frac{\nu_{k}^{3/2}-6\pi^{2}k}{6\pi^{2}\nu_{k}}\right)
+\pi C(6\pi^2)^{-(2-\beta)/3}k^{-(2-\beta)/3} + 3\pi(6\pi^2)^{-2/3}k^{-2/3} \notag\\
&\ \ \ +2(a_{1,k}^*+a_{2,k}^*+a_{3,k}^*)(6\pi^2)^{-1/3}k^{-1/3} + 3\pi D(6\pi^2)^{-(2 - \theta)/3}k^{-(2-\theta)/3} \notag\\
&=8\pi\left(\frac{\nu_{k}^{3/2}-6\pi^{2}k}{6\pi^{2}\nu_{k}}\right) + O(k^{-(2-\beta)/3}). \label{e2.23c}
\end{align}
To obtain an upper bound for $\frac{\nu_{k}^{3/2}-6\pi^{2}k}{6\pi^{2}\nu_{k}}$
we proceed as follows. By \eqref{e2.20d} with $\lambda=\nu_k$ we have that
\begin{equation}\label{ep1.1}
N(\nu_k) = \frac18 T(\nu_k) - \frac38 T_{x_1}(\nu_k) + \frac34 \left\lfloor \frac{\nu_k^{1/2}}{\pi} \right \rfloor
+ \frac14.
\end{equation}
Since $a_1 = a_2 = a_3 = 1$, by \eqref{er2a} and \eqref{er2b}, we have that
\begin{equation}\label{ep1.2a}
\frac{4}{3\pi^{2}}\nu_k^{3/2} - C\nu_k^{\beta/2} \leq T(\nu_k),
\end{equation}
and
\begin{equation}\label{ep1.2b}
T_{x_{1}}(\nu_k) \leq \frac{\nu_k}{\pi} + D\nu_k^{\theta/2} +1,
\end{equation}
where $\beta$ and $\theta$ are as in \eqref{er2a}, \eqref{er2b}. Again let $\Theta_k$ denote the multiplicity of $\nu_k$.
Thus by \eqref{ep1.1}, \eqref{ep1.2a} and \eqref{ep1.2b}, we obtain a lower bound for $N(\nu_k)$:
\begin{equation}\label{ep1.2e}
k + \Theta_k -1 \geq N(\nu_k) \geq \frac{\nu_k^{3/2}}{6\pi^2} - \frac{C}{8}\nu_k^{\beta/2}
-\frac{3}{8\pi}\nu_k - \frac{3D}{8}\nu_k^{\theta/2} + \frac{3}{4\pi} \nu_k^{1/2} - \frac78,
\end{equation}
which implies that
\begin{align*}%\label{ep1.2f}
\frac{\nu_k^{3/2} - 6\pi^2k}{6\pi^2 \nu_k} &\leq \frac{3}{8\pi} + \frac{C}{8}\nu_k^{-(2-\beta)/2}
+ \frac{3D}{8}\nu_k^{-(2-\theta)/2} - \frac{3}{4\pi} \nu_k^{-1/2} + \Theta_k \nu_k^{-1} - \frac18 \nu_k^{-1} \notag\\
&\leq \frac{3}{8\pi} + \frac{C}{8}\nu_k^{-(2-\beta)/2} + \frac{3D}{8}\nu_k^{-(2-\theta)/2} + \Theta_k \nu_k^{-1} \notag\\
&\leq \frac{3}{8\pi} + \frac{C}{8}(6\pi^2)^{-(2-\beta)/3}k^{-(2-\beta)/3}
+ \frac{3D}{8}(6\pi^2)^{-(2-\theta)/3}k^{-(2-\theta)/3} + \Theta_k \nu_k^{-1},
\end{align*}
by P\'olya's Inequality.

We have that $\Theta_k = \# \{(i_1,i_2,i_3) \in \N^3 : i_1^2 + i_2^2 + i_3^2 = \frac{\nu_k}{\pi^2}\}$
is the number of integer lattice points in the first quadrant that lie on the sphere in $\R^3$ which is centred at
$(0,0,0)$ and has radius $\frac{\nu_k^{1/2}}{\pi}$.
It is well known that $\# \{(x_1,x_2,x_3) \in  \Z^3 : x_1^2 + x_2^2 + x_3^2 = d\}=O(d^{\frac12 + o(1)})$.

The following proof is due to T.\ Wooley.
Let $n = d - x_3^2$. Now $\vert x_3 \vert \leq d^{1/2}$, so for $x_3 \in [-d^{1/2},d^{1/2}] \cap \Z$,
there are at most $2d^{1/2} + 1$ possible values of $n$. If $n=0$, then $x_1^2 + x_2^2 = 0$ has one
solution $(0,0) \in \Z^2$. Suppose that $n \neq 0$. Let $R(n)$ denote the number of pairs $(x_1,x_2) \in \Z^2$
such that $x_1^2 + x_2^2 = n$. Then
\begin{equation*}%\label{et1}
\# \{(x_1,x_2,x_3) \in \Z^3 : x_1^2 + x_2^2 + x_3^2 = d\} = 1 + \sum_{\vert z \vert \leq d^{1/2}} R(d-z^2).
\end{equation*}
By Corollary 3.23 of \cite{NZM}, we have that
\begin{equation*}%\label{et2}
R(n) = 4 \sum_{d \vert n, \, d>0, \, d \text{ odd}} \bigg(\frac{-1}{d}\bigg),
\end{equation*}
where the sum is taken over all positive, odd divisors of $n$ and $(\frac{-1}{d})$ is the quadratic
residue symbol.  Thus $R(n) \leq 4D(n)$, where $D(n)$ denotes the number of divisors of $n$.
By Theorem 8.31 of \cite{NZM}, for every $\epsilon >0$, there exists $n_{\epsilon}$ such that for $n > n_{\epsilon}$,
\begin{equation*}%\label{et3}
D(n) < n^{(1+\epsilon) \log 2 / \log \log n},
\end{equation*}
which implies that $D(n) = O(n^{\epsilon})$. Therefore we obtain that
\begin{equation*}%\label{et4}
\# \{(x_1,x_2,x_3) \in \Z^3 : x_1^2 + x_2^2 + x_3^2 = d\} = 1 + O\Big(\sum_{\vert z \vert \leq d^{1/2}} (d-z^2)^{\epsilon}\Big) = 1 + O(d^{1/2 + \epsilon}).
\end{equation*}

So $\Theta_k = O(\nu_k^{\frac12 + o(1)})$
and $\Theta_k \nu_k^{-1} = O(\nu_k^{-\frac12 +o(1)}) = O(k^{-\frac13 + o(1)})$. Thus we obtain
\begin{equation}\label{ep}
\frac{\nu_{k}^{3/2}-6\pi^{2}k}{6\pi^{2}\nu_{k}} \leq \frac{3}{8\pi} + O(k^{-(2-\beta)/3}).
\end{equation}

So by \eqref{e2.23c} and \eqref{ep}, we deduce that
\begin{equation}\label{e2.24}
\frac{1}{a_{1,k}^{*}} + \frac{1}{a_{2,k}^{*}} + \frac{1}{a_{3,k}^{*}} \leq 3 + O(k^{-(2-\beta)/3}),
\, k \rightarrow \infty.
\end{equation}

Furthermore, by the Arithmetic Mean -- Geometric Mean Inequality applied to
$\frac{1}{a_{1,k}^*} + \frac{1}{a_{2,k}^*}$, we have by \eqref{e2.24} that
\begin{equation*}%\label{er5}
2(a_{3,k}^*)^{1/2} + \frac{1}{a_{3,k}^*} \leq 3 + O(k^{-(2-\beta)/3}), \, k \rightarrow \infty.
\end{equation*}
Let $a_{3,k}^* = 1 + \delta_{k}$ where $\delta_{k} > 0$. Then
\begin{equation*}%\label{er6}
2(1+\delta_{k})^{3/2} + 1 \leq 3 + 3 \delta_{k} + O(k^{-(2-\beta)/3}), \, k \rightarrow \infty.
\end{equation*} Since $a_{3,k}^* \leq 319$, $\delta_k \leq 399$. Hence
$(1+\delta_k)^{3/2} \geq 1 + \frac32 \delta_k + \frac{3}{160}\delta_k^2$
for $0 < \delta_k \leq 399$, we deduce that $\delta_k \leq O(k^{-(2 - \beta)/6})$, $k \rightarrow \infty$.
As this estimate is independent of the subsequence $\big(R_{a_{1,k}^*,a_{2,k}^*,a_{3,k}^*}\big)_k$
we arrive at the conclusion of Theorem~\ref{T1}(ii).

\qed

We remark that the proof of Theorem~\ref{T1}(ii) came in two steps. First we obtained bounds for the
counting function of the Dirichlet eigenvalues of an arbitrary cuboid and a unit cube which are uniform
in $k, a_1$ and $k$ respectively (Section~\ref{bounded}).
Having established such boundedness of a minimising cuboid, we then used asymptotic formulae for
both the counting function and the multiplicity of the  Dirichlet eigenvalues of the unit cube
(Section~\ref{convergence}).

%%%%%%%%%%%%%%%%%%%%%%%%%%%%%%
\bigskip
%\ack{acknowledgment}

%%%%%%%%%%%%%%%%%%%%%%%%%%%%%%

\end{document}